\documentclass[a4paper,12pt]{amsart}
	
\usepackage{fullpage}
\usepackage{amsfonts}
\usepackage{amssymb,amsthm,amsmath,color}
\usepackage{array}
\usepackage{mathrsfs}
\usepackage{dsfont}
\usepackage{lmodern}
\usepackage{bbm}
\usepackage{hyperref}
\usepackage{graphicx}
\usepackage[utf8]{inputenc}  
\usepackage[T1]{fontenc} 
\usepackage{stmaryrd}  
\usepackage{tikz}
\usepackage{tkz-euclide}
\usepackage[backend=biber]{biblatex}
\numberwithin{equation}{section}

\DeclareMathAlphabet{\mathbbo}{U}{bbold}{m}{n}

\newcommand{\sq}{\square}
\newcommand{\eps}{\varepsilon}
\newtheorem{theorem}{Theorem}[section]

\newtheorem{lem}[theorem]{Lemma}

\newtheorem{prop}[theorem]{Proposition}

\newcommand{\N}{\mathbb N}

\begin{document}

\title{Large $B_2[g]$ subsets of the first squares}

\author{F. Bosio}
\address{ENS Paris-Saclay, Centre Borelli, UMR 9010, 91190 Gif-sur-Yvette, France}
\email{francois.bosio@ens-paris-saclay.fr}

\author{R. Riblet}
\address{ENS Paris-Saclay, Centre Borelli, UMR 9010, 91190 Gif-sur-Yvette, France}
\email{robin.riblet@ens-paris-saclay.fr}

\author{J. Tarr}
\address{ENS Paris-Saclay, Centre Borelli, UMR 9010, 91190 Gif-sur-Yvette, France}
\email{jacques.tarr@ens-paris-saclay.fr}

\subjclass[2020]{43A46, 11B34, 05D05, 05C65}
\keywords{$B_2[g]$ sets, sums of two squares, independent sets in hypergraphs}

\begin{abstract}
We prove that, for every fixed integer $g\geq 1$, the largest cardinality of a $B_2[g]$ subset of the first $n$ squares is at least a positive constant, depending only on $g$, times
$$
n^{\frac{2g}{2g+1}}(\log n)^{\frac{2-2^g}{2g+1}},
$$
for all sufficiently large $n$.
For $g=1$, this recovers the theorem of Lefmann and Thiele on Sidon subsets of the first squares. The proof follows their hypergraph method, but replaces the $4$-uniform hypergraph encoding two representations as a sum of two squares by a $2(g+1)$-uniform hypergraph encoding $g+1$ such representations. The main point is to verify that this higher-uniformity hypergraph has few edges and few $2$-cycles; the lower bound then follows from an independence theorem of Duke, Lefmann and R\"odl for linear uniform hypergraphs.
\end{abstract}

    \maketitle

\section{Introduction}

A Sidon set, also called a $B_2$ set, is a set $S$ with the property that all sums $a+b$, with $a,b\in S$ and $a\leqslant b$, are distinct. More generally, for $g\geqslant 1$, a set $A\subset \N$ is a $B_2[g]$ set if every integer has at most $g$ representations as a sum of two elements of $A$, counted as unordered pairs. Thus $B_2[1]$ sets are precisely Sidon sets. Sidon sets were introduced by S. Sidon in connection with trigonometric series \cite{Simon_Sidon} and have since become classical objects in additive number theory.

For a subset $E$ of an additive semigroup, we denote by $F_g(E)$ the largest cardinality of a $B_2[g]$ set contained in $E$:
$$F_g(E)=\max\{|A|: A\subseteq E,\ A\text{ is a }B_2[g]\text{ set}\}.$$
When $g=1$, we omit the subscript and simply write $F(E)$. The study of $F_g(E)$ is a classical problem in additive combinatorics. It is now well known (see \cite{HalberstamRoth} or \cite{OBryant}) that $F\left(\left\llbracket 1,n\right\rrbracket\right)\sim\sqrt{n}$. The lower bound was obtained independently by Chowla \cite{Chowla} and Erd\H{o}s \cite{ErdosTuran}, who established
$$  \liminf\limits_{\substack{n\rightarrow +\infty}}\dfrac{F\left(\left\llbracket 1,n\right\rrbracket\right)}{\sqrt{n}}\geqslant 1.$$
For the upper bound, Erd\H{o}s and Tur\'an \cite{ErdosTuran} proved that $F\left(\left\llbracket 1,n\right\rrbracket\right)<\sqrt{n}+O\left( n^{1/4}\right)$. This was later sharpened by Lindström \cite{Lindstrom_ameliore_E-T}, who proved that $F\left(\left\llbracket 1,n\right\rrbracket\right)<\sqrt{n}+n^{1/4}+1$. Recently, Carter, Hunter and O'Bryant \cite{OBryantCarterHunter} obtained
$$   F\left(\left\llbracket 1,n\right\rrbracket\right)<\sqrt{n}+0.98183n^{1/4}+O(1) ,$$
and, even more recently, Hou and Zhao \cite{HouZhao} submitted a preprint with the bound 
$$  F\left(\left\llbracket 1,n\right\rrbracket\right)<\sqrt{n}+0.94601n^{1/4}+O(1).$$
For fixed $g$, the analogous problem for intervals is also well understood at the level of the order of magnitude: one has
$$F_g(\llbracket 1,n\rrbracket)\asymp_g \sqrt n.$$
More precise estimates, and in particular the dependence on $g$, have been studied extensively in the literature. 

More generally, Sidon sets form the first non-trivial case of the theory of $B_h[g]$ sets. For fixed $h\geqslant 2$ and $g\geqslant 1$, the problem of determining the largest size of a $B_h[g]$ subset of an interval, and in particular the dependence on $g$, has attracted considerable attention; see
for instance \cite{CillerueloKissRuzsaVinuesa,CillerueloRuzsaVinuesa,JohnstonTaitTimmons}.

For arbitrary finite sets, much less is known. If $E\subset \mathbb R^N$ has cardinality $n$, Bailleul and Riblet \cite{RibletBailleul} proved that
 $$F_{g}(E)\geqslant\left(\frac{\sqrt{g}}{3\sqrt 3}+o(1)\right)\sqrt n \gtrsim 0.19\sqrt{gn}.$$
This is the strongest currently available general lower bound for arbitrary $n$-point subsets of Euclidean spaces. In the present note, we focus on the structured case
$$\sq_n=\{1^2,2^2,\ldots,n^2\},$$
the set of the first $n$ positive squares in $\N$.

The restriction to perfect powers leads to a rather different class of problems. Equal sums of like powers are closely related to classical questions of Waring type and to the conjecture of Lander, Parkin and Selfridge \cite{LanderParkinSelfridge}. In particular, this conjecture would imply that, for $k\geqslant 5$, the set of perfect $k$-th powers is itself a Sidon set, a question already raised by Erd\H{o}s and Graham. For generalized Sidon sets in
perfect powers, see also \cite{KissSandor,Vu}. The case of squares is special: the set of sums of two squares has zero density, by the Landau-Ramanujan theorem, whereas for $k\geqslant 3$ the corresponding set of sums of $k$
$k$-th powers is expected to have positive density; see
\cite{DeshouillersHennecartLandreau,Maynard,WooleyCubesII}. This makes the square case both more rigid and more accessible to sieve-theoretic upper-bound
arguments.

For $g=1$, Alon and Erd\H{o}s \cite{ErdosAlon} proved in particular that $F(\sq_n)\gg n^{2/3-\eps}$ for every $\eps>0$ and asked whether $F(\sq_n)=n^{1-o(1)}$. Lefmann and Thiele \cite{LefmannThiele} subsequently removed the factor $n^{-\eps}$ and obtained
$$       F(\sq_n)\gg n^{2/3}.$$
Their proof is based on the existence of large independent sets in a suitable $4$-uniform hypergraph.

Our purpose is to show that the same hypergraph strategy extends to the $B_2[g]$ setting. The resulting bound is the following.

\begin{theorem}\label{mainthm}
For every fixed integer $g\geqslant 1$, we have
$$       F_g(\sq_n)\gg_g n^{\frac{2g}{2g+1}}(\log n)^{\frac{2-2^g}{2g+1}}.$$
\end{theorem}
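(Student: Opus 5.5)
Identify $\sq_n$ with $\llbracket 1,n\rrbracket$ via $x\mapsto x^2$. A set $A\subseteq\llbracket 1,n\rrbracket$ fails to give a $B_2[g]$ set of squares exactly when there are $g+1$ distinct unordered pairs $\{a_i,b_i\}\subseteq A$ with $a_1^2+b_1^2=\dots=a_{g+1}^2+b_{g+1}^2$. Pairs with $a_i=b_i$ (at most one per sum) can be handled by a preliminary random deletion or an extra family of edges of lower uniformity; I will suppress this here.

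We build a hypergraph $\mathcal H_g$ on vertex set $\llbracket 1,n\rrbracket$ whose edges are the $2(g+1)$-element sets $\{a_1,b_1,\dots,a_{g+1},b_{g+1}\}$ coming from $g+1$ pairwise disjoint pairs with a common sum of squares. Configurations in which the pairs share elements are degenerate: if $\{a,b\}\neq\{a,c\}$ then $a^2+b^2=a^2+c^2$ forces $b=c$, so distinct pairs with a common sum are automatically disjoint. Hence every violation is a $2(g+1)$-set and $\mathcal H_g$ is genuinely $2(g+1)$-uniform. An independent set of $\mathcal H_g$, after removing one element from each bad diagonal configuration, gives a $B_2[g]$ subset of $\sq_n$.

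Next we count edges. With $r(m)$ the number of representations of $m$ as a sum of two squares,
$$|E(\mathcal H_g)|\ll\sum_{m\le 2n^2} r(m)^{g+1}\ll n^2(\log n)^{2^{g+1}-g-2},$$
using the classical moment estimate $\sum_{m\le x}r(m)^k\asymp x(\log x)^{2^{k-1}-1}$. So the average degree is $t^{2g+1}\asymp n(\log n)^{2^{g+1}-g-2}$.

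We then apply the Duke–Lefmann–Rödl theorem. For a $k$-uniform hypergraph on $N$ vertices with average degree at most $t^{k-1}$ and no $2$-cycles, it gives $\alpha\gg_k (N/t)(\log t)^{1/(k-1)}$. Here $k=2g+2$ and $N=n$, so $t=(n(\log n)^{2^{g+1}-g-2})^{1/(2g+1)}$ and
$$\alpha\gg n^{\frac{2g}{2g+1}}(\log n)^{\frac{1-(2^{g+1}-g-2)}{2g+1}}.$$
This exponent is $\frac{g+3-2^{g+1}}{2g+1}$, whereas the target is $\frac{2-2^g}{2g+1}$. The two agree at $g=1$ but not in general, so the edge count must be sharper. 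In fact only $m$ with $r(m)\ge g+1$ contribute, and the edge count is really $\sum_m\binom{r(m)/8}{g+1}$-type, up to the $8$-fold symmetries. The correct moment is $\sum_m r_{\text{prim}}(m)^{g+1}$ with $r(m)/4$ replaced by the divisor-type function $\prod(1+v_p)$ over $p\equiv1\pmod 4$. This gives $\sum_{m\le x}\tau'(m)^{g+1}\asymp x(\log x)^{2^{g}-1}$, since primes $p\equiv 1\pmod 4$ have density $1/2$, which yields an exponent $2^{g+1}/2-1$. Then
$$1-(2^g-1)=2-2^g,$$
matching the theorem. I will verify this moment carefully via a Selberg–Delange or Rankin-type upper bound; only an upper bound on $|E|$ is needed.

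The last step, and the main obstacle, is the $2$-cycles. Rather than proving the hypergraph is linear, we use the standard deletion trick. Choose a random subset $V'$ with each vertex kept with probability $p=n^{-\delta}$-type, specifically $p\approx t^{-1}\cdot$(small). Then delete one vertex from each surviving $2$-cycle, i.e. each pair of edges sharing $\ge2$ vertices. This is acceptable provided the number of $2$-cycles $(E_1,E_2)$ with $|E_1\cap E_2|=j\ge2$ is small enough, namely $p^{4g+4-j}\#\{\text{such pairs}\}\ll pn$. Bounding these counts is where the arithmetic enters. For $j\ge2$, the two shared vertices lie either in one common pair, which forces the same sum $m$ and so the two edges lie in the $r(m)$-representation family of one $m$, or in two different pairs of $E_1$. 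In the latter case they are $u\in\{a,b\}$ and $v\in\{c,d\}$ with $a^2+b^2=c^2+d^2$, and also $u,v$ in $E_2$ with sum $m'$. We must show the number of such configurations is $\ll n^{j'}(\log n)^{O(1)}$ with an exponent loss of at least one power of $n$ per shared vertex. The tool is the divisor bound in $\mathbb Z[i]$: fixing $u,v$ and two further coordinates determines the rest up to $n^{\eps}$ choices, and also gives a clean $n^{2}\cdot$ polylog count for quadruples $a^2+b^2=c^2+d^2$. I expect checking every intersection pattern for $j=2,\dots,2g+1$ to be the laborious part. After deletion, the Duke–Lefmann–Rödl bound applied to the induced linear subhypergraph on roughly $pn$ vertices, with rescaled degree, gives the claimed size.
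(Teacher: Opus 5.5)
Your overall architecture is the paper's: the same $2(g+1)$-uniform hypergraph on $\{1,\dots,n\}$, the edge bound $|\mathcal E|\ll n^2(\log n)^{2^g-1}$, random sampling, deletion of $2$-cycles, then Duke--Lefmann--R\"odl. Your first edge exponent was only an arithmetic slip: the moment bound with $k=g+1$ already gives $2^{k-1}-1=2^g-1$, so no refinement via primitive representations is needed.

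\textbf{The choice of $p$ loses the logarithmic gain.} Let $t^{2g+1}\asymp n(\log n)^{2^g-1}$ be the original average degree. With $p\approx c/t$, the sampled hypergraph has average degree $\asymp (pt)^{2g+1}=O(1)$. The factor $(\log t')^{1/(2g+1)}$ in Duke--Lefmann--R\"odl is then bounded, and its hypothesis $t'\ge\tau_k$ may even fail. You only get the plain deletion bound
$pn\asymp n/t=n^{\frac{2g}{2g+1}}(\log n)^{\frac{1-2^g}{2g+1}}$,
which is short of the theorem by $(\log n)^{1/(2g+1)}$. For $g=1$ this is $n^{2/3}(\log n)^{-1/3}$, not Lefmann--Thiele. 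You must instead take $p=n^{\eta}/t$ for a fixed small $\eta>0$. The paper uses $p_n=n^{-\frac1{2g+1}+\eta}(\log n)^{-\frac{2^g-1}{2g+1}}$ and $t_n=n^\eta$. Then the degree parameter after sampling is $n^\eta$, and $\log t'\asymp\log n$.

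\textbf{The $2$-cycle estimate is left open, and your recipe for it is too weak.} At this larger density, deleting one vertex per surviving $2$-cycle is affordable provided $c_{2,j}(\mathcal H)\,p^{4g+3-j}=O(n)$ for all $j$. Since $4g+3-j\geq 2g+2$, this follows from $c_2(\mathcal H)p^{2g+2}=o(n)$, i.e.\ from $c_2(\mathcal H)\ll n^{2+\frac1{2g+1}-\delta}$ with $\delta>(2g+2)\eta$. Your recipe (fix $u,v$ and two further coordinates, then $n^{\varepsilon}$ choices) only yields $n^{4+\varepsilon}$, which does not suffice.

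The missing observation is a codegree bound: any two distinct vertices $u,v$ lie in $\ll_{g,\varepsilon}n^{\varepsilon}$ edges.
\begin{itemize}
\item If $u,v$ are paired, the common sum is $u^2+v^2$, and the other pairs come from its $r_\sq(u^2+v^2)\ll n^{\varepsilon}$ representations.
\item If not, their partners $x,y$ satisfy $(y-x)(y+x)=u^2-v^2\neq0$. This gives at most $d(|u^2-v^2|)\ll n^{\varepsilon}$ choices, after which the sum is fixed.
\end{itemize}
Hence $c_2(\mathcal H)\le\sum_{u<v}\binom{\deg(u,v)}{2}\ll n^{2+2\varepsilon}$, uniformly in the intersection size, so no case analysis over $j$ is needed. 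The paper proves the same bound by induction on $g$ and then takes $\eta=\frac{1}{4(g+1)(2g+1)}$.

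\textbf{Diagonal representations.} ``Remove one element from each bad diagonal configuration'' is not obviously a constant-fraction loss, since a priori every element could be the centre of one. Either use the paper's halving lemma $F_g\ge \frac12F_g^*$, or delete these configurations inside the random sample. There only $O(n^{1+\varepsilon}p^{2g+1})=o(pn)$ of them survive in expectation.
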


For $g=1$, the power of the logarithm is $0$, and Theorem \ref{mainthm} recovers the lower bound of Lefmann and Thiele. For larger $g$, the exponent of $n$ tends to $1$ as $g$ tends to infinity, at the cost of a negative logarithmic power.
%
Let us compare Theorem \ref{mainthm} with previous probabilistic
constructions. Cilleruelo \cite{CillerueloProbabilistic} proved that, for every positive integer $g$, there exists an infinite $B_2[g]$ sequence of squares $A=\{a_k\}$ satisfying
$$        a_k\leq k^{2+1/g}(\log k)^{\kappa_g}$$
for some constant $\kappa_g>0$ depending only on $g$. In finite terms, this
already yields a lower bound of the form
$$      F_g(\sq_n)\gg_g
        n^{\frac{2g}{2g+1}}(\log n)^{-\theta_g}$$
for some $\theta_g>0$. Thus Cilleruelo's construction gives the same main power of $n$ as Theorem \ref{mainthm}. Our contribution is to obtain an explicit and sharper logarithmic factor, namely
$$       (\log n)^{\frac{2-2^g}{2g+1}},$$
and to recover the logarithm-free Lefmann--Thiele bound when $g=1$. More recently, Croot, Mao, Pohoata, Sheffer and Yip \cite{CrootAndCo} obtained related lower bounds by random deletion, which are complementary in regimes where $g$ is allowed to grow with $n$.

The argument is as follows. We define a $2(g+1)$-uniform hypergraph on the vertex set $\llbracket 1,n\rrbracket$ whose edges correspond to $g+1$ disjoint representations of the same integer as a sum of two squares. Independent sets in this hypergraph give $B_2[g]$ subsets of $\sq_n$, up to the harmless distinction between sums of two distinct elements and diagonal sums. We then estimate the number of edges and the number of $2$-cycles in this hypergraph, and apply the independence theorem of Duke, Lefmann and R\"odl \cite{DukeLefmannRodl} for uniform hypergraphs with no $2$-cycles.

We also investigated upper bounds for $F_g(\sq_n)$. In particular, we developed a sieve-theoretic approach involving $L^4$ norms, which are directly related to additive energy. Combined with Tauberian arguments, this yields a non-trivial
upper bound. However, around the same time, Croot, Mao, Pohoata, Sheffer, and Yip \cite{CrootAndCo} obtained a stronger estimate, namely
$$F_g(\sq_n)\ll g^{1/4}n\exp\left(-c\frac{\log n}{\log\log n}\right),$$
where $c>0$ is an absolute constant.

\section{Preliminaries on $B_2[g]$ sets}

We shall use the following convention. A set $A\subset \N$ is a $B_2[g]$ set if, for every $m\in \N$, the number of unordered pairs $\{a,b\}\subseteq A$, possibly with $a=b$, such that $a+b=m$, is at most $g$.

It is technically convenient to first ignore diagonal representations. We say that $A$ is a $B_2[g]^*$ set if, for every $m\in\N$, the number of unordered pairs $\{a,b\}\subseteq A$ with $a\neq b$ and $a+b=m$ is at most $g$. For a finite set $X\subset \N$, write
$$ 
    F_g(X)=\max\{|A|:A\subseteq X,\ A\text{ is a }B_2[g]\text{ set}\}
$$ 
and define $F_g^*(X)$ similarly with $B_2[g]^*$ in place of $B_2[g]$.

\begin{lem}\label{starfull}
For every finite set $X\subset\N$ and every $g\geqslant 1$,
$$ 
        \frac12 F_g^*(X)\leqslant F_g(X)\leqslant  F_g^*(X).
$$ 
\end{lem}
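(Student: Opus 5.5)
The upper bound $F_g(X)\leqslant F_g^*(X)$ is immediate. Every $B_2[g]$ set is a $B_2[g]^*$ set, because discarding the diagonal representation can only lower the number of representations of each $m$. For the lower bound, I would take a $B_2[g]^*$ set $A\subseteq X$ with $|A|=F_g^*(X)$ and extract from it a $B_2[g]$ subset of size at least $|A|/2$.

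First, locate exactly where $A$ can fail to be $B_2[g]$. An integer $m$ acquires an extra (diagonal) representation only when $m=2c$ with $c\in A$. The off-diagonal representations of $2c$ are the pairs $\{c-d,c+d\}\subseteq A$ with $d>0$, and there are at most $g$ of them. So $A$ violates the $B_2[g]$ condition only at integers $2c$ with $c$ \emph{bad}, meaning that $2c$ has exactly $g$ off-diagonal representations in $A$. For each bad $c$, let $d(c)$ be the largest $d>0$ with $c\pm d\in A$, and set $x(c)=c+d(c)\in A$. Then $x(c)>c$.

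Next, form the graph $G$ on vertex set $A$ whose edges are the pairs $\{c,x(c)\}$ for $c$ bad. Orient each edge from $c$ to $x(c)$. Every vertex then has out-degree at most $1$, and every edge goes strictly upward in the integer order. Hence $G$ contains no cycle: a cycle would have to return to its smallest element along an upward edge, which is impossible. So $G$ is a forest, and in particular bipartite. Choose a proper $2$-colouring and let $A'$ be the larger colour class, so that $|A'|\geqslant |A|/2$.

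Finally, check that $A'$ is $B_2[g]$. Take $m$ and count its representations in $A'$. If $m$ is odd or $m/2\notin A'$, there is no diagonal representation, and the off-diagonal count is at most that in $A$, hence at most $g$. If $m=2c$ with $c\in A'$ and $c$ not bad, the off-diagonal count is at most $g-1$, and the diagonal pair adds one, giving at most $g$. If $c\in A'$ is bad, then $x(c)\notin A'$ because it lies in the other colour class. So the pair $\{c-d(c),c+d(c)\}$ is lost, the off-diagonal count drops to at most $g-1$, and again the total is at most $g$. Thus $F_g(X)\geqslant |A'|\geqslant \frac12 F_g^*(X)$.

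The only genuinely delicate step is choosing the single "witness" $x(c)$ for each bad $c$ so that the resulting conflict graph is $2$-colourable. Choosing $x(c)>c$ makes out-degree at most $1$ and acyclicity automatic, so bipartiteness comes for free.
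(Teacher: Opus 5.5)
Your proof is correct and is essentially the paper's argument: the paper also builds a digraph on $A$ with out-degree at most $1$ and monotone arrows, sending every $a$ for which $2a$ has an off-diagonal representation to the smallest element $b_a<a$ of such a representation (rather than only bad $c$ to $c+d(c)$). It then $2$-colours by the parity of the length of the forward path from each vertex, which amounts to your forest $2$-colouring. One small tightening: excluding undirected cycles in $G$ needs the out-degree bound, since the smallest vertex of any cycle would have both of its cycle edges leading upward and hence two outgoing edges.
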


\begin{proof}
The upper bound is immediate, since every $B_2[g]$ set is a $B_2[g]^*$ set. Let $A\subseteq X$ be a $B_2[g]^*$ set. We shall extract a $B_2[g]$ subset of $A$ of size at least $|A|/2$.

Define a directed relation $\to$ on $A$ as follows. If $2a$ has at least one representation $2a=b+c$ with $b,c\in A$, $b<c$, let $a\to b_a$, where $b_a$ is the smallest element of $A$ which occurs in such a representation. If no such representation exists, $a$ has no successor. Each element has at most one successor. Moreover, $a\to b$ implies $b<a$, and therefore the directed graph has no directed cycle.

For each $a\in A$, let $h(a)$ be the maximal length of a directed path starting from $a$. If $a\to b$, then $h(a)=h(b)+1$. Hence the parity classes of $h$ form a partition $A=A_0\sqcup A_1$ such that no element of $A_i$ points to another element of $A_i$. Choose $i$ with $|A_i|\geqslant |A|/2$ and put $B=A_i$.

We claim that $B$ is a $B_2[g]$ set. Since $A$ is $B_2[g]^*$, only diagonal sums may have more than $g$ representations in $B$ when diagonal representations are counted. Fix $a\in B$. If $2a$ has no representation by two distinct elements of $A$, then $\{a,a\}$ is its only representation involving a diagonal pair. If $2a$ has at least one such representation in $A$, then $a\to b_a$, and $b_a\notin B$. Thus at least one off-diagonal representation of $2a$ present in $A$ is removed when passing to $B$. Since there were at most $g$ off-diagonal representations in $A$, the total number of representations of $2a$ in $B$, including $\{a,a\}$, is at most $g$. This proves the claim.
\end{proof}

We shall repeatedly use estimates for the number of representations by two squares. Let
$$ 
        r_\sq(m)=\#\bigl\{\{a,b\}:1\leqslant a<b,
        \ a^2+b^2=m\bigr\}.
$$ 
The following estimates are standard. The last one is a special case of a theorem of Blomer and Granville \cite{BlomerGranville2006}.

\begin{lem}\label{squaremoments}
For every $\eps>0$ and every $m\geqslant 1$,
$$ 
        r_\sq(m)\ll_\eps m^\eps.
$$ 
Moreover, for every fixed integer $h\geqslant 1$,
$$ 
        \sum_{m\leqslant x} r_\sq(m)^h\ll_h x(\log x)^{2^{h-1}-1}.
$$ 
\end{lem}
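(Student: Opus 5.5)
We prove the two parts of Lemma \ref{squaremoments} separately. Both rest on the classical formula for the number of representations as a sum of two squares, $r_2(m)=\#\{(a,b)\in\mathbb Z^2: a^2+b^2=m\}=4\sum_{d\mid m}\chi_{-4}(d)$, where $\chi_{-4}$ is the nontrivial character modulo $4$. Since $r_\sq(m)$ counts unordered pairs of distinct positive integers, we have $r_\sq(m)\leqslant r_2(m)/8+1\ll d(m)$, where $d$ is the divisor function. The first estimate then follows from the classical bound $d(m)\ll_\eps m^\eps$. That bound is proved by writing $d(m)/m^\eps=\prod_{p^k\| m}(k+1)/p^{k\eps}$ and observing that every factor with $p\geqslant 2^{1/\eps}$ is at most $1$, while the finitely many remaining factors are bounded in terms of $\eps$ alone.

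For the moment bound, we use that $\rho(m):=\sum_{d\mid m}\chi_{-4}(d)$ is multiplicative. It satisfies $\rho(p^k)=k+1$ for $p\equiv1\pmod 4$, $\rho(p^k)\in\{0,1\}$ for $p\equiv3\pmod4$, and $\rho(2^k)=1$. Hence $r_\sq(m)^h\ll_h \rho(m)^h$, and it suffices to bound $\sum_{m\leqslant x}\rho(m)^h$. The function $f=\rho^h$ is nonnegative and multiplicative, with $f(p)=2^h$ for $p\equiv1\pmod4$ and $f(p)\leqslant 1$ otherwise. On prime powers it satisfies $f(p^k)\leqslant (k+1)^h$. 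We then apply Shiu's theorem, or more simply the elementary Rankin/Halberstam--Richert type bound
$$\sum_{m\leqslant x}f(m)\ll \frac{x}{\log x}\sum_{m\leqslant x}\frac{f(m)}{m}\ll \frac{x}{\log x}\prod_{p\leqslant x}\Bigl(1+\frac{f(p)}{p}+\frac{f(p^2)}{p^2}+\cdots\Bigr),$$
which holds for nonnegative multiplicative functions with $f(p^k)\leqslant A^k$ (Halberstam--Richert). The product is controlled by Mertens' theorem in arithmetic progressions, $\sum_{p\leqslant x,\,p\equiv1 (4)}1/p=\tfrac12\log\log x+O(1)$. This gives a product of size $\ll(\log x)^{2^h/2}(\log x)^{1/2}$, so the sum is $\ll x(\log x)^{2^{h-1}+1/2-1}$.

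This falls short of the stated exponent by a factor $(\log x)^{1/2}$. The loss comes from the primes $p\equiv 3\pmod4$, where we bounded $f(p)\leqslant1$. In fact $\rho(p)=1+\chi_{-4}(p)=0$ for such $p$. With the correct value $f(p)=0$, these primes contribute only through even powers, and the bounded factor $\prod(1+O(p^{-2}))$ handles them. The product is then $\ll(\log x)^{2^{h-1}}$, and we obtain
$$\sum_{m\leqslant x}r_\sq(m)^h\ll_h x(\log x)^{2^{h-1}-1}.$$

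The main obstacle is justifying the Halberstam--Richert inequality $\sum_{m\leqslant x}f(m)\ll \frac{x}{\log x}\sum_{m\leqslant x} f(m)/m$ under the growth condition on prime powers. The standard route writes $\log x\sum f(m)\leqslant\sum f(m)\log m+\sum f(m)\log(x/m)$. One bounds the second term by $\sum_{m\leqslant x} f(m)\,x/m$, and the first by expanding $\log m=\sum_{p^k\| m}\log p^k$ and using Chebyshev's bound $\sum_{p\leqslant y}\log p\ll y$, together with $f(p^k)\ll_h (k+1)^h$ to control the higher prime powers. Alternatively, we may simply cite the Blomer--Granville result mentioned in the text, which gives the moment bound directly, and also Shiu's theorem.
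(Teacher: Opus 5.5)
Your proof is correct, but for the moment bound it takes a different route from the paper. The pointwise bound is the same ($r_\sq(m)\ll d(m)\ll_\eps m^\eps$). For the moment bound, however, the paper gives no argument at all: it cites Blomer--Granville and remarks that passing from ordered representations to $r_\sq$ costs only a constant depending on $h$.

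You instead prove the bound directly. You use $r_\sq(m)\leqslant \rho(m)/2$ with $\rho=r_2/4=1*\chi_{-4}$ multiplicative. You then apply the Halberstam--Richert inequality $\sum_{m\leqslant x}f(m)\ll \frac{x}{\log x}\sum_{m\leqslant x}f(m)/m$ to $f=\rho^h$, and your sketch of that inequality via $\log x=\log m+\log(x/m)$ and Chebyshev is sound. An Euler product and Mertens' theorem for primes $p\equiv 1\pmod 4$ finish the argument. This makes transparent where the exponent comes from: only primes $p\equiv 1\pmod 4$, of relative density $1/2$, contribute, each with weight $\rho(p)^h=2^h$. Hence $\sum_{m\leqslant x}f(m)/m\ll(\log x)^{2^{h-1}}$, and the factor $1/\log x$ gives the stated exponent. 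The input that makes this sharp is Mertens in the progression $1\bmod 4$, equivalently $\sum_{p\leqslant x}\chi_{-4}(p)/p=O(1)$; Mertens over all primes would only give exponent $2^h-1$.

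What each approach buys: yours is elementary, self-contained for the specific form $a^2+b^2$, and yields exactly the upper bound the paper needs. The citation is shorter, and Blomer--Granville is much stronger (asymptotics, arbitrary real moments, general binary forms uniformly in the discriminant), though none of that extra strength is used here.

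Two small corrections. First, the hypothesis you state for Halberstam--Richert, $f(p^k)\leqslant A^k$, is not sufficient in general. Take $f$ supported on powers of $2$ with $f(2^k)=4^k$ and $x=2^N$: the left side is $\asymp x^2$, while the right side is $\asymp x^2/\log x$. The standard hypotheses are either $f(p^k)\leqslant\lambda_1\lambda_2^{k-1}$ with $\lambda_2<2$, or $\sum_{p\leqslant y}f(p)\log p\ll y$ together with $\sum_p\sum_{k\geqslant 2}f(p^k)\log(p^k)/p^k<\infty$. Your $f$ satisfies $f(p^k)\leqslant (k+1)^h$, which meets both conditions, and your final sketch uses only this, so the argument stands. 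Second, the detour through $f(p)\leqslant 1$ for $p\equiv 3\pmod 4$, with its $(\log x)^{1/2}$ loss, can simply be dropped, since $\rho(p)=0$ there from the start.
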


\begin{proof}
The first estimate follows from the elementary bound $r_\sq(m)\ll d(m)$ and the classical estimate $d(m)\ll_\eps m^\eps$ for the divisor function. The moment estimate follows from \cite{BlomerGranville2006}. Passing from the usual ordered representation function to $r_\sq$ only changes the estimates by constants depending on $h$.
\end{proof}

\section{A hypergraph independence lemma}

We recall the hypergraph theorem that will be used. A hypergraph $\mathcal H=(V,\mathcal E)$ is $k$-uniform if all its edges have cardinality $k$. A $2$-cycle is a pair of distinct edges $\{E,F\}$ such that $|E\cap F|\geqslant 2$. We write $c_2(\mathcal H)$ for the number of $2$-cycles and $c_{2,i}(\mathcal H)$ for the number of pairs of distinct edges satisfying $|E\cap F|=i$. A hypergraph is linear if it has no $2$-cycle. We write $\Delta(\mathcal H)$ for its maximum degree and $I(\mathcal H)$ for its independence number.

We shall use the following maximum-degree consequence of the theorem of Duke, Lefmann and R\"odl. Their result is usually stated in terms of the average degree of a $k$-uniform hypergraph containing no $2$-cycles. Since a
bound on the maximum degree immediately gives the same bound on the average degree, the form below follows directly from their theorem.

\begin{theorem}[Duke-Lefmann-R\"odl]\label{DLR}
Let $k\geqslant 3$. There exist constants $\lambda_k,\tau_k>0$ such that the following holds. If $\mathcal H=(V,\mathcal E)$ is a finite linear $k$-uniform hypergraph with $|V|=N$ and
$$ 
        \Delta(\mathcal H)\leqslant t^{k-1}
$$ 
for some $t\geqslant \tau_k$, then
$$ 
        I(\mathcal H)\geqslant \lambda_k\frac{N}{t}(\log t)^{\frac1{k-1}}.
$$ 
\end{theorem}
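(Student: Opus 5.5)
The plan is to read the statement off the original theorem of Duke, Lefmann and R\"odl rather than to re-prove the independence bound. Their theorem, as usually stated, says the following. For each $k\geqslant 3$ there are constants $\lambda_k,\tau_k>0$ such that every linear $k$-uniform hypergraph on $N$ vertices whose \emph{average} degree $\bar d=k|\mathcal E|/N$ satisfies $\bar d\leqslant t^{k-1}$ with $t\geqslant\tau_k$ has independence number at least $\lambda_k\frac{N}{t}(\log t)^{\frac{1}{k-1}}$. The only step needed is the inequality $\bar d\leqslant\Delta(\mathcal H)$, which holds because the average of the vertex degrees never exceeds their maximum. So the hypothesis $\Delta(\mathcal H)\leqslant t^{k-1}$ gives $\bar d\leqslant t^{k-1}$, and the conclusion transfers verbatim with the same constants.

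The one point requiring care is the exact form in which the original theorem is stated. If it is phrased with the equality $\bar d=t^{k-1}$, or requires $\bar d$ to be at least a large constant, I would reduce to that form in one of two ways.

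\begin{itemize}
\item[(i)] Define $t'$ by $t'^{\,k-1}=\max(\bar d,\tau_k^{k-1})$, so that $t'\leqslant t$. This works provided the form actually proved covers sparse linear hypergraphs, that is, $\bar d\leqslant t'^{\,k-1}$ with $t'\geqslant\tau_k$. In that case the theorem gives $I(\mathcal H)\geqslant\lambda_k\frac{N}{t'}(\log t')^{\frac1{k-1}}$. The function $x\mapsto x^{-1}(\log x)^{\frac1{k-1}}$ is decreasing for $x\geqslant e^{1/(k-1)}$, so after enlarging $\tau_k$ if necessary this lower bound is at least $\lambda_k\frac{N}{t}(\log t)^{\frac1{k-1}}$.
\item[(ii)] Alternatively, add edges to $\mathcal H$ until its average degree reaches $t^{k-1}$ while keeping it linear. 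For instance, append a disjoint linear hypergraph on new vertices, such as a Steiner-type system, with $O(N)$ extra vertices. Then compare the independence numbers, since an independent set of the enlarged hypergraph restricts to an independent set of $\mathcal H$ after discarding the new vertices.
\end{itemize}

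I would use route (i), because it costs nothing beyond monotonicity.

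The main (mild) obstacle is therefore purely bookkeeping. One must check that the original theorem allows an upper bound on the average degree, rather than an exact value, and adjust $\tau_k$ so that the monotonicity of $t^{-1}(\log t)^{1/(k-1)}$ applies.

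Should a self-contained argument be wanted instead, I would follow the standard semi-random proof. Keep each vertex independently with probability $p=t^{-1}$ times a small power of $\log t$. Delete one vertex from each surviving edge, and from each short cycle in the sampled subhypergraph, whose expected number is controlled using linearity. The result is an uncrowded linear hypergraph with average degree $O(1)$ relative to the new scale. Then apply the Ajtai--Koml\'os--Pintz--Spencer--Szemer\'edi theorem for uncrowded hypergraphs to gain the factor $(\log t)^{\frac{1}{k-1}}$. In that route the hard step is the cycle deletion, but for the present paper the citation together with $\bar d\leqslant\Delta$ suffices.
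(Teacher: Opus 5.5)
Your argument is correct and is the paper's own: the paper cites the average-degree form of the Duke--Lefmann--R\"odl theorem and notes that $\Delta(\mathcal H)\leqslant t^{k-1}$ forces the average degree to be at most $t^{k-1}$, and your route (i), using that $x\mapsto x^{-1}(\log x)^{1/(k-1)}$ decreases for $x\geqslant e^{1/(k-1)}$, is a valid safeguard the paper leaves implicit. One slip in your optional self-contained sketch: you should not delete a vertex from every surviving edge, since that removes all edges and loses the $(\log t)^{1/(k-1)}$ gain; delete only from short cycles and high-degree vertices before applying the Ajtai--Koml\'os--Pintz--Spencer--Szemer\'edi theorem.
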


The next proposition is a standard pruning device, tailored to the
hypergraphs considered below, allowing us to apply the preceding theorem to hypergraphs which are not initially linear and may have
large maximum degree. The idea is to pass to a random induced subhypergraph,
then delete one vertex from each remaining $2$-cycle and finally delete the
vertices whose degree is too large. The hypotheses are arranged so that these
deletions remove only a positive proportion of the randomly selected vertices.

\begin{prop}\label{pruning}
Let $k\geqslant 3$. For each $n$, let $\mathcal H_n=(V_n,\mathcal E_n)$ be a finite $k$-uniform hypergraph. Suppose that there exist sequences $p_n\in[0,1]$ and $t_n>0$ such that
\begin{align*}
        p_n|V_n|&\longrightarrow +\infty,\\
        t_n&\longrightarrow +\infty,\\
        c_{2,i}(\mathcal H_n)p_n^{2k-i-1}&=O(|V_n|)\qquad (2\leqslant i\leqslant k-1),\\
        |\mathcal E_n|\left(\frac{p_n}{t_n}\right)^{k-1}&=O(|V_n|).
\end{align*}
Then there exists a constant $C>0$, depending only on $k$ and on the implicit constants above, such that, for all sufficiently large $n$,
$$ 
        I(\mathcal H_n)\geqslant C|V_n|\frac{p_n}{t_n}(\log t_n)^{\frac1{k-1}}.
$$ 
\end{prop}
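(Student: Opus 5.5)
We prove Proposition \ref{pruning} by the usual random sampling and deletion, arranged so that Theorem \ref{DLR} applies to what remains. Fix $n$ large and write $p=p_n$, $t=t_n$, $N=|V_n|$. Let $X\subseteq V_n$ be random, with each vertex kept independently with probability $p$. Then $\mathbb E|X|=pN$. Since $pN\to\infty$, Chebyshev's inequality gives $|X|\geqslant pN/2$ with probability tending to $1$.

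\textbf{Destroying the $2$-cycles.} A $2$-cycle $\{E,F\}$ with $|E\cap F|=i$ has $|E\cup F|=2k-i$ vertices. It survives in $\mathcal H_n[X]$ with probability $p^{2k-i}$. So the expected number of surviving $2$-cycles is
$$\sum_{i=2}^{k-1}c_{2,i}(\mathcal H_n)p^{2k-i}=p\sum_{i=2}^{k-1} c_{2,i}(\mathcal H_n)p^{2k-i-1}\leqslant A\,pN,$$
where $A$ is the implicit constant of the hypothesis. The case $i=k$ cannot occur because the edges are distinct.

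\textbf{Controlling the degrees.} The expected number of surviving edges is $|\mathcal E_n|p^k\leqslant B\,pN\,t^{k-1}$. We want to avoid a constant-factor loss from the ratio $A/1$, so we rescale. Put $s=\lambda t$ with a small constant $\lambda$ and redo the bookkeeping with $p'=\delta p$ for a small $\delta>0$; the hypotheses are monotone in $p$, so they still hold. The expected number of surviving $2$-cycles becomes at most $\delta^{k+1}A\,pN$, since $2k-i\geqslant k+1$. The expected number of surviving edges becomes at most $\delta^{k}B\,pN\,t^{k-1}$, while $|X|\approx\delta pN$. Choose $\delta$ with $\delta^{k}A\leqslant 1/8$. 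By Markov's inequality, with positive probability we then have simultaneously: $|X|\geqslant \delta pN/2$, at most $\delta pN/8$ surviving $2$-cycles, and at most $4\delta^kB\,pN\,t^{k-1}$ surviving edges. Fix such an $X$.

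Delete one vertex from each surviving $2$-cycle; this leaves a linear hypergraph. Next, let $D=16\,k\,\delta^{k-1}B\,t^{k-1}$ and delete every vertex of degree greater than $D$. Since the degrees sum to $k$ times the number of edges, there are at most $4k\delta^kBpNt^{k-1}/D=\delta pN/4$ such vertices. What remains is a set $Y$ with $|Y|\geqslant \delta pN/8$. The induced hypergraph on $Y$ is linear and $k$-uniform, with maximum degree at most $D=(t')^{k-1}$, where $t'=(16k\delta^{k-1}B)^{1/(k-1)}t$. Deleting vertices can only lower degrees, so the degree bound is not affected by the order of the two deletion steps. Also $t'\to\infty$, so $t'\geqslant\tau_k$ for large $n$.

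\textbf{Applying Theorem \ref{DLR}.} This gives
$$I(\mathcal H_n)\geqslant I(\mathcal H_n[Y])\geqslant\lambda_k\frac{|Y|}{t'}(\log t')^{\frac1{k-1}}\gg pN\,t^{-1}(\log t)^{\frac1{k-1}},$$
because $\log t'\sim\log t$. The implied constant depends only on $k$, $A$ and $B$. An independent set of an induced subhypergraph is independent in $\mathcal H_n$, since every edge of $\mathcal H_n$ contained in $Y$ is an edge of $\mathcal H_n[Y]$.

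The main obstacle is the bookkeeping of constants. The removed vertices must be a small fraction of $|X|$, and this is exactly what forces the density to be reduced by $\delta$. One has to check that $\delta$ depends only on $k$ and the implicit constants, and that it changes $t$ only by a constant factor.
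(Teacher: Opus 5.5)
Your argument is the paper's proof: you sample at a density rescaled by a constant, control $|X|$ by Chebyshev (the paper uses Chernoff), and control the surviving $2$-cycles and edges by Markov. You then delete one vertex per $2$-cycle and every vertex of degree above a threshold of order $t_n^{k-1}$, and apply Theorem \ref{DLR}. The only cosmetic difference is that you absorb the constant into $t'$ afterwards, whereas the paper normalizes $t_n$ beforehand.

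One constant needs fixing. With $\delta^kA\leqslant 1/8$ you only know $\mathbb E\, c_2\leqslant \delta pN/8$, so Markov at the threshold $\delta pN/8$ bounds the failure probability by $1$, which is useless. Choose instead $\delta^kA\leqslant 1/32$, with $A$ also absorbing the factor $k-2$ from summing over $i$. That failure probability is then at most $1/4$, and the rest of your bookkeeping goes through unchanged.
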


\begin{proof}
Multiplying $p_n$ by a sufficiently small positive constant, we may assume that, for $n$ large enough,
$$ 
        c_{2,i}(\mathcal H_n)p_n^{2k-i-1}\leqslant \frac{|V_n|}{16(k-2)}
        \qquad(2\leqslant i\leqslant k-1).
$$ 
Similarly, after replacing $t_n$ by a sufficiently large fixed constant
multiple of $t_n$, at the cost of changing only the final constant, we may assume
$$ 
        |\mathcal E_n|\left(\frac{p_n}{t_n}\right)^{k-1}\leqslant \frac{|V_n|}{32k}.
$$ 
The constants by which we rescale $p_n$ and $t_n$ depend only on $k$ and on the implicit constants in the two domination assumptions. Therefore the final constant obtained at the end of the proof has the same dependence.
Let $R_n\subseteq V_n$ be chosen by keeping each vertex independently with probability $p_n$, and let $\mathcal H'_n$ be the induced subhypergraph. By a Chernoff bound,
$$ 
        \mathbb P\left(|R_n|<\frac12p_n|V_n|\right)\to 0.
$$ 
Furthermore,
$$ 
        \mathbb E c_{2,i}(\mathcal H'_n)=c_{2,i}(\mathcal H_n)p_n^{2k-i}.
$$ 
Summing over $2\leqslant i\leqslant k-1$ gives
$$ 
        \mathbb E c_2(\mathcal H'_n)\leqslant \frac1{16}p_n|V_n|.
$$ 
By Markov's inequality,
$$ 
        \mathbb P\left(c_2(\mathcal H'_n)>\frac14p_n|V_n|\right)\leqslant \frac14.
$$ 
Likewise,
$$ 
        \mathbb E|\mathcal E'_n|=|\mathcal E_n|p_n^k,
$$ 
and therefore
$$ 
        \mathbb P\left(|\mathcal E'_n|>4|\mathcal E_n|p_n^k\right)\leqslant \frac14.
$$ 
Hence, for all sufficiently large $n$, there exists an induced subhypergraph $\mathcal H_n^*=(V_n^*,\mathcal E_n^*)$ such that
$$ 
    |V_n^*|\geqslant \frac12p_n|V_n|,
    \qquad c_2(\mathcal H_n^*)\leqslant \frac14p_n|V_n|,
    \qquad |\mathcal E_n^*|\leqslant 4|\mathcal E_n|p_n^k.
$$ 
The sum of degrees in $\mathcal H_n^*$ is at most $4k|\mathcal E_n|p_n^k$. Thus the number $T_n$ of vertices of degree larger than $t_n^{k-1}$ satisfies
$$ 
        T_n t_n^{k-1}\leqslant 4k|\mathcal E_n|p_n^k,
$$ 
and consequently, by the second normalization,
$$ 
        T_n\leqslant 4k|\mathcal E_n|\left(\frac{p_n}{t_n}\right)^{k-1}p_n
        \leqslant \frac18p_n|V_n|.
$$ 
Remove from $V_n^*$ one vertex from every $2$-cycle and all vertices of degree larger than $t_n^{k-1}$. The induced hypergraph $\mathcal H_n^\dagger=(V_n^\dagger,\mathcal E_n^\dagger)$ is linear, has maximum degree at most $t_n^{k-1}$, and satisfies
$$ 
        |V_n^\dagger|\geqslant \left(\frac12-\frac14-\frac18\right)p_n|V_n|=\frac18p_n|V_n|.
$$ 
Applying Theorem \ref{DLR} to $\mathcal H_n^\dagger$ gives
$$ 
        I(\mathcal H_n)\geqslant I(\mathcal H_n^\dagger)
        \geqslant \frac{\lambda_k}{8}|V_n|\frac{p_n}{t_n}(\log t_n)^{\frac1{k-1}},
$$ 
provided $n$ is large enough that $t_n\geqslant \tau_k$. This proves the proposition.
\end{proof}

\section{The hypergraph of additive collisions among squares}

Fix $g\geqslant 1$. For $n\geqslant 1$, we define a $2(g+1)$-uniform hypergraph
$$ 
        \mathcal H_{g,n}=(V_{g,n},\mathcal E_{g,n})
$$ 
by taking $V_{g,n}=\llbracket 1,n\rrbracket$ and by declaring that a set
$$ 
        E=\{a_1,b_1,\ldots,a_{g+1},b_{g+1}\}
$$ 
of cardinality $2(g+1)$ is an edge if the elements can be paired in such a way that
$$ 
        a_1^2+b_1^2=a_2^2+b_2^2=\cdots=a_{g+1}^2+b_{g+1}^2.
$$ 
Equivalently, an edge records $g+1$ disjoint representations of the same integer as a sum of two positive squares. We shall use the following simple observation: the witnessing pairing of an edge is unique. Indeed, if
$$
        x_1<\cdots <x_{2(g+1)}
$$
are the vertices of an edge, then the pairs are necessarily
$$
        \{x_i,x_{2(g+1)+1-i}\}, \qquad 1\leqslant i\leqslant g+1.
$$
For if $x_1$ were paired with some $x_j$ with $j<2(g+1)$, then $x_{2(g+1)}$ would be paired with some $x_\ell>x_1$, and we would have
$$
        x_\ell^2+x_{2(g+1)}^2>x_1^2+x_j^2,
$$
a contradiction. Removing this pair and repeating the same argument gives
the claimed pairing.

\begin{lem}\label{independentB2}
For every $S\subseteq V_{g,n}$, the set $S$ is independent in
$\mathcal H_{g,n}$ if and only if
$$      S^2:=\{s^2:s\in S\}\subseteq \sq_n$$
is a $B_2[g]^*$ set. In particular,
$$      F_g^*(\sq_n)= I(\mathcal H_{g,n}).$$
\end{lem}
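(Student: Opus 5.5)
We prove the equivalence directly by contraposition in both directions; the equality $F_g^*(\sq_n)=I(\mathcal H_{g,n})$ then follows at once. The map $s\mapsto s^2$ is a bijection from subsets of $\llbracket 1,n\rrbracket$ onto subsets of $\sq_n$, so maximizing $|S|$ over independent $S$ is the same as maximizing $|S^2|$ over $B_2[g]^*$ subsets of $\sq_n$.

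First suppose $S^2$ is not a $B_2[g]^*$ set. Then some $m$ has at least $g+1$ distinct unordered pairs $\{u,v\}\subseteq S^2$ with $u\neq v$ and $u+v=m$. Write these as $u=a_j^2$, $v=b_j^2$ with $a_j\neq b_j$ in $S$, for $1\leqslant j\leqslant g+1$. The key observation is that distinct off-diagonal representations of the same $m$ are automatically disjoint. Indeed, if $\{a_j,b_j\}$ and $\{a_\ell,b_\ell\}$ share an element, say $a_j=a_\ell$, then $b_j^2=m-a_j^2=b_\ell^2$, so $b_j=b_\ell$ by positivity, and the two pairs coincide. Hence the $2(g+1)$ numbers $a_1,b_1,\ldots,a_{g+1},b_{g+1}$ are pairwise distinct: within a pair because $a_j\neq b_j$, across pairs by disjointness. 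They therefore form a set of cardinality $2(g+1)$ contained in $S$ which, with the obvious pairing, is an edge of $\mathcal H_{g,n}$. So $S$ is not independent.

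Conversely, suppose $S$ contains an edge $E=\{a_1,b_1,\ldots,a_{g+1},b_{g+1}\}$, paired so that $a_j^2+b_j^2=m$ for all $j$. Since $|E|=2(g+1)$, all these elements are distinct. Thus the pairs $\{a_j^2,b_j^2\}$ are $g+1$ pairwise distinct unordered pairs of distinct elements of $S^2$, each summing to $m$. This violates the $B_2[g]^*$ condition.

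No real obstacle arises. The only point requiring care is the disjointness of distinct representations, which is what makes the $g+1$ representations yield exactly $2(g+1)$ vertices. The uniqueness of the pairing noted before the lemma is not needed here.
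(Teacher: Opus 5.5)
Your proof is correct and follows essentially the same route as the paper. An edge contained in $S$ gives $g+1$ off-diagonal representations of one integer, and conversely $g+1$ distinct off-diagonal representations must be pairwise disjoint, since $a^2+b^2=a^2+c^2$ forces $b=c$, so their union is an edge. Your explicit remark that $s\mapsto s^2$ is a cardinality-preserving bijection onto subsets of $\sq_n$ just spells out the ``in particular'' step, which the paper leaves implicit.
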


\begin{proof}
If $S$ contains an edge, then the $g+1$ pairs defining this edge give $g+1$ distinct off-diagonal representations of the same integer as a sum of two elements of $S^2$, so $S^2$ is not $B_2[g]^*$.

Conversely, suppose that $S^2$ is not a $B_2[g]^*$ set. Then there exist $g+1$ distinct unordered pairs $\{a_i,b_i\}$, with $a_i,b_i\in S$ and $a_i\neq b_i$, such that
$$ 
        a_1^2+b_1^2=\cdots=a_{g+1}^2+b_{g+1}^2.
$$ 
Two distinct pairs occurring in this way cannot share exactly one element: if, for instance, $a^2+b^2=a^2+c^2$, then $b=c$. Thus the $g+1$ pairs are pairwise disjoint. Their union is therefore an edge of $\mathcal H_{g,n}$ contained in $S$, contradicting the independence of $S$.
\end{proof}

We now estimate the number of edges and $2$-cycles of $\mathcal H_{g,n}$.

\begin{lem}\label{edgecount}
For every fixed $g\geqslant 1$,
$$ 
        |\mathcal E_{g,n}|\ll_g n^2(\log n)^{2^g-1}.
$$ 
\end{lem}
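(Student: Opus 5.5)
Every edge of $\mathcal H_{g,n}$ has a common value $m=a_1^2+b_1^2=\cdots=a_{g+1}^2+b_{g+1}^2$, and we will count edges by grouping them according to $m$. Since all vertices lie in $\llbracket 1,n\rrbracket$, we have $m\leqslant 2n^2$. By the uniqueness of the witnessing pairing, established just before Lemma \ref{independentB2}, an edge is determined by $m$ together with an unordered set of $g+1$ pairwise disjoint representations of $m$. Each representation is an unordered pair $\{a,b\}$ with $a<b$, which is exactly what $r_\sq(m)$ counts.

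Fixing $m$, the number of such $(g+1)$-sets of representations is at most $\binom{r_\sq(m)}{g+1}\leqslant r_\sq(m)^{g+1}$. Therefore
$$
|\mathcal E_{g,n}|\leqslant \sum_{m\leqslant 2n^2}\binom{r_\sq(m)}{g+1}\leqslant \sum_{m\leqslant 2n^2} r_\sq(m)^{g+1}.
$$

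Next we apply the moment estimate of Lemma \ref{squaremoments} with $h=g+1$ and $x=2n^2$. This gives
$$
\sum_{m\leqslant 2n^2} r_\sq(m)^{g+1}\ll_g n^2(\log n)^{2^{g}-1},
$$
using $\log(2n^2)\asymp\log n$ and $2^{h-1}-1=2^g-1$. This is exactly the claimed bound.

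There is no real obstacle. The only points to check are that the exponent bookkeeping $h=g+1$ gives $2^g-1$, and that the representations in an edge are off-diagonal, which is automatic because the $2(g+1)$ vertices are distinct. One could refine the count by restricting to $m$ with $r_\sq(m)\geqslant g+1$, but this is unnecessary for an upper bound.
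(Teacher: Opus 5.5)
Your proof is correct and is essentially the paper's argument: group the edges by their common value $m\leqslant 2n^2$, bound the number of edges for each $m$ by $\binom{r_\sq(m)}{g+1}\leqslant r_\sq(m)^{g+1}$, and apply the moment estimate of Lemma \ref{squaremoments} with $h=g+1$. The uniqueness of the pairing is not actually needed for this upper bound. It suffices that every edge is the union of some $g+1$ disjoint representations of some $m$.
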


\begin{proof}
For each $m\leqslant 2n^2$, there are at most $r_\sq(m)$ unordered representations of $m$ as a sum of two distinct squares. Choosing $g+1$ of them gives an upper bound for the number of edges associated with $m$. Hence, by Lemma \ref{squaremoments},
$$ 
        |\mathcal E_{g,n}|
        \leqslant \sum_{m\leqslant 2n^2}\binom{r_\sq(m)}{g+1}
        \ll_g \sum_{m\leqslant 2n^2} r_\sq(m)^{g+1}
        \ll_g n^2(\log n)^{2^g-1}.
$$ 
\end{proof}

\begin{lem}\label{cyclecount}
For every fixed $g\geqslant 1$ and every $\eps>0$,
$$ 
        c_2(\mathcal H_{g,n})=O_{g,\eps}(n^{2+\eps}).
$$ 
\end{lem}

\begin{proof}
It is convenient to prove the slightly stronger estimate
$$ 
        \widehat c_g(n):=c_2(\mathcal H_{g,n})+|\mathcal E_{g,n}|
        =O_{g,\eps}(n^{2+\eps})
$$ 
by induction on $g$.

We begin with $g=1$, which is the case treated by Lefmann and Thiele \cite{LefmannThiele}. We recall the elementary estimate needed here. By Lemma \ref{edgecount}, $|\mathcal E_{1,n}|\ll n^2\log n$. Let $E\in\mathcal E_{1,n}$ be fixed. The number of edges $E'$ with $|E\cap E'|=3$ is $O(1)$ for each choice of three vertices of $E$, since the fourth vertex is then determined by the relevant equation of sums of two squares. Hence such pairs contribute $O(|\mathcal E_{1,n}|)$.

It remains to consider edges $E'$ with $|E\cap E'|=2$. Fix two common
vertices $x,y\in E\cap E'$. By the uniqueness of the pairing of $E'$, there are two cases. If $x$ and $y$ are paired together in $E'$, then the other
pair of $E'$ is a representation of $x^2+y^2$ as a sum of two distinct
squares. Thus the number of possibilities is at most
$$ 
        \max_{m\leqslant 2n^2} r_\sq(m).
$$ 
If $x$ and $y$ are not paired together in $E'$, then, after relabelling the
two remaining vertices as $u$ and $v$, we have
$$ 
        x^2+u^2=y^2+v^2.
$$ 
Hence
$$ 
        |x^2-y^2|=|v^2-u^2|=|v-u||v+u|.
$$ 
Since $x\neq y$, the integer $|x^2-y^2|$ is non-zero, and the number of
possible pairs $(u,v)$ is bounded by the divisor function
$d(|x^2-y^2|)$. By the standard divisor bound, and by the estimate
$r_\sq(m)\ll_\eps m^\eps$, the number of edges $E'$ sharing two fixed
vertices with $E$ is $O_\eps(n^\eps)$. Since there are only $O(1)$ choices
for the two common vertices inside $E$, the number of edges adjacent to a
fixed edge through two vertices is $O_\eps(n^\eps)$. Therefore
$$ 
        c_2(\mathcal H_{1,n})\ll_\eps |\mathcal E_{1,n}|n^\eps\ll_\eps n^{2+\eps},
$$ 
after replacing $\eps$ by a smaller positive number. This proves the base case.

Assume now that $g\geqslant 2$ and that the result is known for $g-1$.
We count broad $2$-cycles, that is, pairs of edges which are allowed to be
equal. This only changes the quantity under consideration by adding
$|\mathcal E_{g,n}|$.

Let $(E,E')$ be such a broad $2$-cycle in $\mathcal H_{g,n}$, and choose two common vertices of $E$ and $E'$. Since each edge consists of $g+1\geqslant 3$ pairs, there is, in each edge, at least one pair avoiding these two common vertices. Removing such a pair from $E$ and from $E'$ gives two edges of $\mathcal H_{g-1,n}$, still sharing the two chosen vertices. Thus they form a broad $2$-cycle in $\mathcal H_{g-1,n}$.

Conversely, every broad $2$-cycle in $\mathcal H_{g,n}$ is obtained in this way by starting from some broad $2$-cycle in $\mathcal H_{g-1,n}$ and adding to each of the two edges one further representation of its associated integer as a sum of two distinct squares. If
$$ 
        R_n:=\max_{m\leqslant 2n^2} r_\sq(m),
$$ 
then each of the two edges has at most $R_n$ possible extensions. Hence
$$ 
        \widehat c_g(n)\leqslant \widehat c_{g-1}(n) R_n^2.
$$ 
By Lemma \ref{squaremoments}, for every $\eps>0$ we have
$R_n\ll_\eps n^\eps$. Combining this with the induction hypothesis, and
replacing $\eps$ by a smaller positive number if necessary, gives
$$ 
        \widehat c_g(n)=O_{g,\eps}(n^{2+\eps}).
$$ 
This completes the induction.
\end{proof}

\section{Proof of the main theorem}

We now apply Proposition \ref{pruning} to the hypergraphs $\mathcal H_{g,n}$. Fix $g\geqslant 1$ and put
$$ 
        k=2(g+1).
$$ 
Let
$$ 
        \eta=\frac{1}{4(g+1)(2g+1)}
$$ 
and define
$$ 
        p_n=n^{-\frac1{2g+1}+\eta}(\log n)^{-\frac{2^g-1}{2g+1}},
        \qquad
        t_n=n^\eta.
$$ 
Clearly $p_n|V_{g,n}|=np_n\to+\infty$ and $t_n\to+\infty$.

The edge condition follows from Lemma \ref{edgecount}. Indeed,
\begin{align*}
        |\mathcal E_{g,n}|\left(\frac{p_n}{t_n}\right)^{2g+1}
        &\ll_g n^2(\log n)^{2^g-1}
        \left(n^{-\frac1{2g+1}}(\log n)^{-\frac{2^g-1}{2g+1}}\right)^{2g+1}\\
        &\ll_g n.
\end{align*}

We next verify the cycle condition. Since $0<p_n\leqslant 1$ for $n$ large enough, and since $2k-i-1\geqslant k$ for every $2\leqslant i\leqslant k-1$, it is enough to show that
$$ 
        c_2(\mathcal H_{g,n})p_n^k=o(n).
$$ 
By Lemma \ref{cyclecount}, for every $\delta>0$,
$$ 
        c_2(\mathcal H_{g,n})p_n^{2(g+1)}
        \ll_{g,\delta} n^{2+\delta}
        n^{-\frac{2g+2}{2g+1}+(2g+2)\eta}.
$$ 
Since
$$ 
        \frac{1}{2g+1}-(2g+2)\eta=\frac{1}{2(2g+1)}>0,
$$ 
we may choose $\delta>0$ small enough to obtain
$$ 
        c_2(\mathcal H_{g,n})p_n^{2(g+1)}=o(n).
$$ 
Thus all hypotheses of Proposition \ref{pruning} are satisfied. Consequently, for some constant $C_g>0$ and all sufficiently large $n$,
\begin{align*}
        I(\mathcal H_{g,n})
        &\geqslant C_g n\frac{p_n}{t_n}(\log t_n)^{\frac1{2g+1}}\\
        &\geqslant C_g n^{\frac{2g}{2g+1}}
        (\log n)^{-\frac{2^g-1}{2g+1}}
        (\log n)^{\frac1{2g+1}}\\
        &=C_g n^{\frac{2g}{2g+1}}(\log n)^{\frac{2-2^g}{2g+1}}.
\end{align*}
By Lemma \ref{independentB2}, this gives the same lower bound for
$F_g^*(\sq_n)$. Finally, Lemma \ref{starfull} yields
$$ 
        F_g(\sq_n)\geqslant \frac12 F_g^*(\sq_n)
        \gg_g n^{\frac{2g}{2g+1}}(\log n)^{\frac{2-2^g}{2g+1}}.
$$
This proves Theorem \ref{mainthm}.


\printbibliography

\end{document}